

\documentclass[ECP]{ejpecp} 






\SHORTTITLE{A rank-based mean field game} 

\TITLE{A rank-based mean field game in the strong formulation\thanks{This research is supported in part by the National Science Foundation under grant DMS-1613170.}} 



\AUTHORS{%
  Erhan~Bayraktar\footnote{University of Michigan, United States of America.
    \EMAIL{erhan@umich.edu}}
  \and 
  Yuchong~Zhang\footnote{Columbia University,
    United States of America. \EMAIL{yz2915@columbia.edu} }}



\KEYWORDS{Mean field games; competition; common noise; rank-dependent interaction; non-local interaction; strong formulation} 

\AMSSUBJ{60H; 91A} 

\SUBMITTED{March 26, 2016} 
\ACCEPTED{October 6, 2016} 




\VOLUME{0}
\YEAR{2012}
\PAPERNUM{0}
\DOI{vVOL-PID}


\ABSTRACT{We discuss a natural game of competition and solve the corresponding mean field game with \emph{common noise} when agents' rewards are \emph{rank-dependent}. We use this solution to provide an approximate Nash equilibrium for the finite player game and obtain the rate of convergence.}


\newcommand{\snorm}[1]{\| #1\|_\infty}
\newcommand\numberthis{\addtocounter{equation}{1}\tag{\theequation}}


\begin{document}



\section{Introduction}
Mean field games (MFGs), introduced independently by \cite{MR2295621} and \cite{MR2346927}, provide a useful approximation for the finite player Nash equilibrium problems in which the players are coupled through their empirical distribution. In particular, the mean field game limit gives an approximate  Nash equilibrium, in which the agents' decision making is decoupled.
In this paper we will consider a particular game in which the interaction of the players is through their ranks. Our main goal is to construct an approximate Nash equilibrium for a finite player game when the agents' dynamics are modulated by common noise.

Rank-based mean field games, which have non-local mean field interactions, have been suggested in \cite{MR2762362} and analyzed more generally by the recent paper by Carmona and Lacker \cite{LackerAAP} using the weak formulation, when there is no common noise. There are currently no results on the rank-dependent mean field games with common noise.
In order to solve the problem with common noise, we will make use of the mechanism in \cite{DW2014} by 
solving the \emph{strong formulation} of the rank-dependent mean field game without common noise and then by observing that purely rank-dependent reward functions are translation invariant. 

The rest of the paper is organized as follows: In Section~\ref{sec:Nplayer} we introduce the N-player game in which the players are coupled through the reward function which is rank-based. In Section~\ref{sec:IN} we consider the case without common noise. We first find the mean field limit, discuss the uniqueness of the Nash equilibrium, and construct an approximate Nash equilibrium using the mean field limit. Using these results, in Section~\ref{sec:CN}  we use the mechanism in \cite{DW2014} and obtain respective results for the common noise.

\section{The $N$-player game}\label{sec:Nplayer}

We consider $N$ players each of whom controls her own state variable and is rewarded based on her rank.
We will denote by $X_i$ the $i$-th player's state variable, and assume that it satisfies the following stochastic differential equation (SDE)
\[dX_{i,t}=a_{i,t}dt+\sigma dB_{i,t}+\sigma_0 dW_t, \quad X_{i,0}=0,\]
where $a_{i}$ is the control by agent $i$, and $(B_{i})_{i=1,\ldots, N}$ and $W$ are independent standard Brownian motions defined on some filtered probability space $(\Omega, \mathcal{F}, \{\mathcal{F}_t\}_{t\in[0,T]}, \mathbb{P})$, representing the idiosyncratic noises and common noise, respectively. The game ends at time $T>0$, when each player receives a rank-based reward minus the running cost of effort, which we will assumed to be quadratic $c a^2$ for some constant $c>0$. 

In order to precisely define the rank-based reward, let
\[\bar\mu^N:=\frac{1}{N}\sum_{i=1}^N \delta_{X_{i,T}}\] 
denote the empirical measure of the terminal state of the $N$-player system. Then $\bar\mu^N(-\infty, X_{i,T}]$ 
gives the fraction of players that finish the same or worse than player $i$. Let $\mathbb{R}\times [0,1]\ni (x,r)\mapsto R(x,r)\in \mathbb{R}$ be a bounded continuous function that is non-decreasing in both arguments. For any probability measure $\mu$ on $\mathbb{R}$, write $R_\mu(x)=R(x,\mu(-\infty,x])=R(x,F_\mu(x))$ where $F_\mu$ denotes the cumulative distribution function of $\mu$. The reward player $i$ receives is given by 
\[R_{\bar\mu^N}(X_{i,T})=R(X_{i,T},{\bar\mu^N}(-\infty,X_{i,T}])=R(X_{i,T},F_{\bar\mu^N}(X_{i,T})).\]
When $R(x,r)$ is independent of $x$, the compensation scheme is purely rank-based. In general, we could have a mixture of absolute performance compensation and relative performance compensation. The objective of each player is to observe the progress of all players and choose her effort level to maximize the expected payoff, while anticipating the other players' strategies. 

The players' equilibrium expected payoffs, as functions of time and state variables, satisfy a system of $N$ coupled nonlinear partial differential equations subject to discontinuous boundary conditions, which appears to be analytically intractable. Fortunately, in a large-population game, the impact of any individual on the whole population is very small. So it is often good enough for each player to ignore the private state of any other individual and simply optimize against the aggregate distribution of the population. As a consequence, the equilibrium strategies decentralize in the limiting game as $N\rightarrow \infty$. We shall use the mean field limit to construct approximate Nash equilibrium for the $N$-player game, both in the case with and without common noise.

\section{Mean field approximation when there is no common noise}\label{sec:IN}

In this section, we assume $\sigma_0=0$. Solving the mean field game consists of two sub-problems: a stochastic control problem and a fixed-point problem (also called the consistency condition). For any Polish space $\mathcal{X}$, denote by $\mathcal{P}(\mathcal{X})$ the space of probability measures on $\mathcal{X}$, and $\mathcal{P}_1(\mathcal{X}):=\{\mu\in \mathcal{P}(\mathcal{X}): \int_{\mathcal{X}}|x| d\mu(x)<\infty\}$. 

We first fix a distribution $\mu\in\mathcal{P}(\mathbb{R})$ of the terminal state of the population, and consider a single player's optimization problem:
\begin{equation}\label{agent-problem}
v(t,x):=\sup_{a} \mathbb{E}_{t,x}\left[R_\mu(X_T)-\int_t^T c a_s^2ds\right]
\end{equation}
where
\begin{equation}\label{agent-problem-SDE}
dX_s=a_s ds+\sigma dB_s,
\end{equation}
$B$ is a Brownian motion, and $a$ ranges over the set of progressively measurable processes satisfying $\mathbb{E}\int_0^T |a_s| ds<\infty$.
The associated dynamic programming equation is
\[v_t+\sup_{a}\left\{av_x+\frac{1}{2}\sigma^2 v_{xx}-ca^2\right\}=0\]
with terminal condition $v(T,x)=R_\mu(x)$. Using the first-order condition, we obtain that the candidate optimizer is $a^\ast=\frac{v_x}{2c}$, and the Hamilton-Jacobi-Bellman (HJB) equation can be written as
\[v_t+\frac{1}{2}\sigma^2 v_{xx}+\frac{(v_x)^2}{4c}=0.\]
The above equation can be linearized using the Cole-Hopf transformation $u(t,x):=e^{(2c\sigma^2)^{-1}v(t,x)}$, giving
\[u_t+\frac{1}{2}\sigma^2 u_{xx}=0.\]
Together with the boundary condition $u(T,x)=e^{(2c\sigma^2)^{-1} R_\mu(x)}$, we can easily write down the solution:
\begin{equation}\label{u}
u(t,x)=\mathbb{E}\left[\exp\left(\frac{1}{2c\sigma^2}R_\mu(x+\sigma \sqrt{T-t}Z)\right)\right]
\end{equation}
where $Z$ is a standard normal random variable. Let us further write $u$ as an integral:
\begin{align*}
u(t,x)&=\int_{-\infty}^\infty \exp\left(\frac{1}{2c\sigma^2}R_\mu(x+\sigma \sqrt{T-t}z)\right) \frac{1}{\sqrt{2\pi}}\exp\left(-\frac{z^2}{2}\right)dz\\
&=\int_{-\infty}^\infty \exp\left(\frac{1}{2c\sigma^2}R_\mu(y)\right) \frac{1}{\sqrt{2\pi\sigma^2(T-t)}}\exp\left(-\frac{(y-x)^2}{2\sigma^2(T-t)}\right)dy.
\end{align*}
Using the dominated convergence theorem, we can differentiate under the integral sign and get
\begin{align*}
u_x(t,x)&=\int_{-\infty}^\infty \exp\left(\frac{1}{2c\sigma^2}R_\mu(y)\right) \frac{1}{\sqrt{2\pi\sigma^2(T-t)}}\exp\left(-\frac{(y-x)^2}{2\sigma^2(T-t)}\right) \frac{(y-x)}{\sigma^2(T-t)}dy \\
&=\int_{-\infty}^\infty \exp\left(\frac{1}{2c\sigma^2}R_\mu(x+\sigma\sqrt{T-t}z)\right) \frac{1}{\sqrt{2\pi}}\exp\left(-\frac{z^2}{2}\right) \frac{z}{\sigma\sqrt{T-t}}dz\\ 
&=\mathbb{E}\left[\exp\left(\frac{1}{2c\sigma^2}R_\mu(x+\sigma \sqrt{T-t}Z)\right)\frac{Z}{\sigma\sqrt{T-t}}\right]. \numberthis \label{u_x}
\end{align*}
Similarly, we obtain
\begin{equation}\label{u_xx}
u_{xx}=\mathbb{E}\left[\exp\left(\frac{1}{2c\sigma^2} R_\mu (x+\sigma \sqrt{T-t}Z)\right)\frac{Z^2-1}{\sigma^2(T-t)}\right].
\end{equation}
Using \eqref{u}-\eqref{u_xx}, together with the boundedness and monotonicity of $R$, we easily get the following estimates. Note that all bounds are independent of $\mu$.
\begin{lemma} \label{lemma:bdd}
The functions $u$ and $v$ satisfy
\begin{align*}
0<K^{-1}\leq u(t,x)\leq K,\quad & -\snorm{R}\leq v(t,x)\leq \snorm{R},\\
0\leq u_x(t,x)\leq \frac{K}{\sigma}\sqrt{\frac{2}{\pi}}\frac{1}{\sqrt{T-t}},\quad & 0\leq v_x(t,x)\leq 2c\sigma K^2\sqrt{\frac{2}{\pi}} \frac{1}{\sqrt{T-t}},\\
|u_{xx}(t,x)|\leq \frac{2K}{\sigma^2}\frac{1}{T-t},\quad & |v_{xx}(t,x)|\leq \frac{4cK^2(1+K^2\pi^{-1})}{T-t},
\end{align*}
where $K:=\exp((2c\sigma^2)^{-1}\snorm{R})$.
\end{lemma}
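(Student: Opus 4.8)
The plan is to read all six estimates directly off the probabilistic representations \eqref{u}, \eqref{u_x}, \eqref{u_xx}, the Cole--Hopf relation $v=2c\sigma^2\ln u$ (equivalently $u=e^{(2c\sigma^2)^{-1}v}$), and the elementary Gaussian moments $\mathbb{E}|Z|=\sqrt{2/\pi}$, $\mathbb{E}[Z^2]=1$, and $\mathbb{E}|Z^2-1|\le\mathbb{E}[Z^2+1]=2$; the differentiation under the integral sign needed for \eqref{u_x}--\eqref{u_xx} has already been justified above. For the bounds on $u$ and $v$: since $R$ is bounded we have $-\snorm{R}\le R_\mu\le\snorm{R}$ pointwise, hence $K^{-1}\le\exp((2c\sigma^2)^{-1}R_\mu)\le K$; taking expectations in \eqref{u} gives $K^{-1}\le u\le K$, and taking logarithms and multiplying by $2c\sigma^2$ gives $-\snorm{R}\le v\le\snorm{R}$.

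For the first-order estimates, the only point requiring an argument is $u_x\ge0$. Here I would first record that $R_\mu(x)=R(x,F_\mu(x))$ is non-decreasing in $x$, because $x\mapsto x$ and $x\mapsto F_\mu(x)$ are non-decreasing and $R$ is non-decreasing in each argument; consequently $g:=\exp((2c\sigma^2)^{-1}R_\mu)\ge0$ is non-decreasing, and either $x\mapsto u(t,x)=\mathbb{E}[g(x+\sigma\sqrt{T-t}Z)]$ is non-decreasing as an average of translates of a monotone function, or, from \eqref{u_x}, $\sigma\sqrt{T-t}\,u_x=\mathbb{E}[g(x+\sigma\sqrt{T-t}Z)Z]=\mathrm{Cov}(g(x+\sigma\sqrt{T-t}Z),Z)\ge0$ by the correlation inequality for monotone functions. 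For the upper bound, bound the integrand of \eqref{u_x} by $g\,|Z|\le K|Z|$ to get $0\le u_x\le\frac{K}{\sigma\sqrt{T-t}}\mathbb{E}|Z|=\frac{K}{\sigma}\sqrt{2/\pi}\,(T-t)^{-1/2}$. Then $v_x=2c\sigma^2\,u_x/u$, and combining $0\le u_x$, the bound just obtained, and $u^{-1}\le K$ yields $0\le v_x\le 2c\sigma K^2\sqrt{2/\pi}\,(T-t)^{-1/2}$.

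For the second-order estimates, \eqref{u_xx} gives $|u_{xx}|\le\frac{K}{\sigma^2(T-t)}\mathbb{E}|Z^2-1|\le\frac{2K}{\sigma^2(T-t)}$ at once. Differentiating $v=2c\sigma^2\ln u$ twice gives $v_{xx}=2c\sigma^2\bigl(u_{xx}/u-(u_x/u)^2\bigr)$, and inserting $u^{-1}\le K$ together with the bounds on $u_{xx}$ and $u_x$ gives $|v_{xx}|\le 2c\sigma^2\bigl(K\cdot\frac{2K}{\sigma^2(T-t)}+K^2\cdot\frac{(2/\pi)K^2}{\sigma^2(T-t)}\bigr)=\frac{4cK^2(1+K^2\pi^{-1})}{T-t}$. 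The main (and only mildly non-trivial) obstacle is establishing the sign of $u_x$, hence of $v_x$; everything else is Jensen/boundedness plus the three Gaussian moments, and $\mu$-independence is automatic since $K$ and those moments do not involve $\mu$.
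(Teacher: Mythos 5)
Your proof is correct and follows exactly the route the paper intends: the paper merely asserts that the estimates follow ``easily'' from the representations \eqref{u}--\eqref{u_xx} together with the boundedness and monotonicity of $R$, and your write-up supplies precisely those details (including the one genuinely non-automatic point, the sign of $u_x$ via monotonicity of $x\mapsto R(x,F_\mu(x))$ and the covariance inequality), with all constants matching. Nothing further is needed.
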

Since $v_{xx}$ is bounded, the drift coefficient $a^\ast=\frac{v_x}{2c}$ is Lipschitz continuous in $x$. It follows that the optimally controlled state process, denoted by $X^\ast$, has a strong solution on $[0,T)$. Observe that
\[0\leq\int_t^T a^\ast(s,X^\ast_s)ds\leq \int_t^T \frac{\sigma K^2 \sqrt{2/\pi}}{\sqrt{T-s}}ds=2\sigma K^2 \sqrt{\frac{2(T-t)}{\pi}} <\infty.\]
So the optimal cumulative effort is bounded by some constant independent of $\mu$. It also implies that $X^\ast_u=x+\int_t^u a^\ast(s,X^\ast_s)ds+\sigma (B_u-B_t)$ has a well-defined limit as $u\rightarrow T$. Standard verification theorem yields that the solution to the HJB equation is the value function of the problem \eqref{agent-problem}-\eqref{agent-problem-SDE}, and that $a^\ast$ is the optimal Markovian feedback control. Finally, using the dominated convergence theorem again, we can show that for $t<T$,
\[\lim_{x\rightarrow \pm\infty} u_x(t,x)=0.\]
The same limits also hold for $a^\ast$ since $u$ is bounded away from zero.
In other words, the optimal effort level is small when the progress is very large in absolute value. This agrees with many real life observations that when a player has a very big lead, it is easy for her to show slackness; and when one is too far behind, she often gives up on the game instead of trying to catch up.

\subsection{Existence of a Nash equilibrium}\label{subsec:existenceNE}
For each fixed $\mu\in\mathcal{P}(\mathbb{R})$, solving the stochastic control problem \eqref{agent-problem}-\eqref{agent-problem-SDE} yields a value function $v(t,x;\mu)$ and a best response $a^\ast(t,x)=(2c)^{-1} v_x(t,x;\mu)$. Suppose the game is started at time zero, with zero initial progress, the optimally controlled state process $X^\mu$ of the generic player satisfies the SDE
\begin{equation}\label{SDE}
dX_t=\frac{v_x(t,X_t;\mu)}{2c}dt+\sigma dB_t, \ X_0=0.
\end{equation}
Finding a Nash equilibrium for the limiting game is equivalent to finding a fixed point of the mapping $\Phi: \mu\mapsto \mathcal{L}(X^\mu_{T})$, where $\mathcal{L}(\cdot)$ denotes the law of its argument. We shall sometimes refer to such a fixed point as an equilibrium measure.

\begin{theorem}
The mapping $\Phi$ has a fixed point.
\end{theorem}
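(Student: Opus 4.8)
The plan is to apply the Schauder fixed point theorem to the map $\Phi : \mu \mapsto \mathcal{L}(X^\mu_T)$ on a suitably chosen compact convex subset of $\mathcal{P}(\mathbb{R})$ equipped with the topology of weak convergence (metrized, say, by the Lévy--Prokhorov metric or the bounded-Lipschitz metric). First I would identify an appropriate domain: since $X^\mu_T = \sigma B_T + \int_0^T a^\ast(s, X^\mu_s)\,ds$ and the cumulative optimal effort is bounded by the $\mu$-independent constant $C_0 := 2\sigma K^2\sqrt{2T/\pi}$ from the computation in the excerpt, the law $\Phi(\mu)$ is always a translate-by-bounded-amount of a Gaussian-like law; concretely, all the measures $\Phi(\mu)$ have uniformly bounded first (indeed exponential) moments and are uniformly tight. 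So I would take $\mathcal{Q} := \overline{\mathrm{conv}}\,\{\Phi(\mu) : \mu \in \mathcal{P}(\mathbb{R})\}$, or more explicitly the set of laws of $\sigma B_T + \xi$ with $\|\xi\|_\infty \le C_0$; this set is convex, and tight hence relatively compact, and one checks it is closed, so it is a compact convex subset of $\mathcal{P}(\mathbb{R})$, and $\Phi$ maps it into itself.

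The main work is continuity of $\Phi$. Given $\mu_n \to \mu$ weakly, I must show $\mathcal{L}(X^{\mu_n}_T) \to \mathcal{L}(X^\mu_T)$ weakly. The natural route is to show that the drift coefficients $a^\ast(\cdot,\cdot;\mu_n) = (2c)^{-1} v_x(\cdot,\cdot;\mu_n)$ converge to $a^\ast(\cdot,\cdot;\mu)$ in a strong enough sense (e.g. locally uniformly on $[0,T)\times\mathbb{R}$) and then pass to the limit in the SDE \eqref{SDE}. From the explicit formulas \eqref{u}--\eqref{u_xx}, $u$, $u_x$, $u_{xx}$ and hence $v_x$ depend on $\mu$ only through the function $R_\mu(y) = R(y, F_\mu(y))$ inside the Gaussian integral. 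The subtlety is that $R_\mu$ depends on $\mu$ through its CDF, which need not converge pointwise everywhere; but weak convergence $\mu_n\to\mu$ gives $F_{\mu_n}(y)\to F_\mu(y)$ at every continuity point $y$ of $F_\mu$, i.e. Lebesgue-a.e., and $R$ is bounded and continuous, so $\exp((2c\sigma^2)^{-1}R_{\mu_n}(y)) \to \exp((2c\sigma^2)^{-1}R_\mu(y))$ for a.e.\ $y$, with a uniform bound $K$. Dominated convergence in \eqref{u} and \eqref{u_x} then yields $u(t,x;\mu_n)\to u(t,x;\mu)$ and $u_x(t,x;\mu_n)\to u_x(t,x;\mu)$ pointwise; combined with the $\mu$-uniform bounds of Lemma~\ref{lemma:bdd} (which give local equi-boundedness and, via $u_{xx}$, local equicontinuity in $x$), one upgrades this to locally uniform convergence of $a^\ast$ on $[0,T)\times\mathbb{R}$.

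With locally uniform convergence of the Lipschitz drifts in hand, a standard stability estimate for SDEs (Gronwall, using the uniform Lipschitz bound $\|v_{xx}\|\le 4cK^2(1+K^2\pi^{-1})/(T-t)$ on compact time intervals $[0,T-\delta]$) shows $\sup_{s\le T-\delta}\mathbb{E}|X^{\mu_n}_s - X^\mu_s|\to 0$; then the uniform control $\int_{T-\delta}^T a^\ast\,ds \le 2\sigma K^2\sqrt{2\delta/\pi}$ on the remaining small interval lets me send $\delta\to0$ and conclude $X^{\mu_n}_T \to X^\mu_T$ in $L^1$, hence $\Phi(\mu_n)\to\Phi(\mu)$ weakly. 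Schauder's theorem then delivers a fixed point. I expect the delicate point to be exactly the handling of the drift near the terminal time $t=T$, where the bounds in Lemma~\ref{lemma:bdd} blow up like $(T-t)^{-1/2}$: one must exploit that the \emph{integrated} effort stays bounded uniformly in $\mu$ so that nothing bad happens to $X_T$ itself, and be careful that the limiting process is well-defined at $T$ (which the excerpt already established via the finite cumulative-effort bound).
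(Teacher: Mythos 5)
Your proposal is correct and follows essentially the same route as the paper: Schauder's theorem on a compact convex set of laws singled out by the $\mu$-uniform bound on the cumulative optimal effort, continuity of $\Phi$ via the explicit Gaussian representations of $u$ and $u_x$, a.e.\ convergence of $F_{\mu_n}$ to $F_\mu$ plus dominated convergence, and a Gronwall stability estimate that exploits the integrability of the $(T-t)^{-1/2}$ blow-up of $v_x$ near the terminal time. The only cosmetic differences are that the paper works in $(\mathcal{P}_1(\mathbb{R}),W_1)$ with the set $\{\mu:\int|x|^2\,d\mu\le C_0\}$ rather than your weak-topology set, and it passes to the limit in the drift by dominated convergence in $t$ along the path of $X^\mu$ rather than by first upgrading to locally uniform convergence of $a^\ast$.
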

\begin{proof}
Similar to \cite{CarmonaDelarue13}, we will use Schauder's fixed point theorem. Observe that for any $\mu\in\mathcal{P}(\mathbb{R})$, we have
\[\mathbb{E}\left[ |X^\mu_T|^2\right]\leq \mathbb{E}\left[\left(2\sigma K^2 \sqrt{\frac{2T}{\pi}}+\sigma |B_T|\right)^2\right]=:C_0.\]
This implies the set of $\Phi(\mu)=\mathcal{L}(X^\mu_T)$ is tight in $\mathcal{P}(\mathbb{R})$, hence relatively compact for the topology of weak convergence by Prokhorov theorem. Recall that $\mathcal{P}_1(\mathbb{R})=\{\mu\in \mathcal{P}(\mathbb{R}): \int_{\mathbb{R}}|x| d\mu(x)<\infty\}$. Equip $\mathcal{P}_1(\mathbb{R})$ with the topology induced by the 1-Wasserstein metric:
\begin{align*}
W_1(\mu, \mu')&:=\inf\left\{\int_{\mathbb{R}^2} |x-y| d\pi(x,y): \pi\in \mathcal{P}_1(\mathbb{R}^2) \text{ with marginals } \mu \text{ and } \mu'\right\}\\
&=\sup \left\{\int_{\mathbb{R}}\psi d\mu-\int_{\mathbb{R}}\psi d\mu': \psi\in \text{Lip}_1(\mathbb{R})\right\}.
\end{align*}
Here $\text{Lip}_1(\mathbb{R})$ denotes the space of Lipschitz continuous functions on $\mathbb{R}$ whose Lipschitz constant is bounded by one. It is known that $(\mathcal{P}_1(\mathbb{R}), W_1)$ is a complete separable metric space (see e.g.\ \cite[Theorem 6.18]{OptimalTransport2009}).
We shall work with a subset of $\mathcal{P}_1(\mathbb{R})$ defined by
\[\mathcal{E}:=\left\{\mu\in \mathcal{P}_1(\mathbb{R}): \int_{\mathbb{R}}|x|^2 d\mu(x)\leq C_0\right\}.\]
It is easy to check that $\mathcal{E}$ is non-empty, convex and closed (for the topology induced by the $W_1$ metric). Moreover, one can show using \cite[Definition 6.8(iii)]{OptimalTransport2009} that any weakly convergent sequence $\{\mu_n\}\subseteq \mathcal{E}$ is also $W_1$-convergent. Therefore, $\mathcal{E}$ is also relatively compact for the topology induced by the $W_1$ metric. So we have found a non-empty, convex and compact set $\mathcal{E}$ such that $\Phi$ maps $\mathcal{E}$ into itself. It remains to show $\Phi$ is continuous on $\mathcal{E}$. In the rest of the proof, the constant $C$ may change from line to line.

Let $\{\mu_k\}\subseteq\mathcal{E}$ such that $W_1(\mu_k, \mu)\rightarrow 0$ as $k\rightarrow \infty$. We wish to show $W_1(\Phi(\mu_k), \Phi(\mu))\rightarrow 0$. Note that
\[W_1(\Phi(\mu_k), \Phi(\mu))\leq \mathbb{E}\left[|X^{\mu_k}_T-X^\mu_T|\right]\leq \frac{1}{2c}\int_0^T  \mathbb{E} \left[|v_x(t,X^{\mu_k}_t;\mu_k)-v_x(t,X^{\mu}_t;\mu)|\right]dt.\]
From Lemma~\ref{lemma:bdd}, we know that
$|v_x(t,X^{\mu_k}_t;\mu_k)-v_x(t,X^{\mu}_t;\mu)|\leq \frac{C}{\sqrt{T-t}}$.
Since $\int_0^T \frac{C}{\sqrt{T-t}}dt<\infty$, thanks to the dominated convergence theorem, it suffices to show for $t\in[0,T)$,
\[  \mathbb{E}\left[ |v_x(t,X^{\mu_k}_t;\mu_k)-v_x(t,X^{\mu}_t;\mu)|\right]\rightarrow 0.\]

By Lemma~\ref{lemma:bdd} and the mean value theorem, we have that
\begin{align*}
&|v_x(t,X^{\mu_k}_t;\mu_k)-v_x(t,X^{\mu}_t;\mu)|\\
&\leq |v_x(t,X^{\mu_k}_t;\mu_k)-v_x(t,X^{\mu}_t;\mu_k)|+|v_x(t,X^{\mu}_t;\mu_k)-v_x(t,X^{\mu}_t;\mu)|\\
&\leq \frac{C}{T-t}|X^{\mu_k}_t-X^\mu_t|+|v_x(t,X^{\mu}_t;\mu_k)-v_x(t,X^{\mu}_t;\mu)|.
\end{align*}
So to show $W_1(\Phi(\mu_k), \Phi(\mu))\rightarrow 0$, it suffices to show that for each fixed $t\in[0,T)$, 
\begin{equation}\label{claim1}
\mathbb{E}\left[|v_x(t,X^{\mu}_t;\mu_k)-v_x(t,X^{\mu}_t;\mu)|\right]\rightarrow 0,
\end{equation}
and 
\begin{equation}\label{claim2}
\mathbb{E}\left[|X^{\mu_k}_t-X^\mu_t|\right]\rightarrow 0.
\end{equation}

We first show \eqref{claim1}. Using the estimates in Lemma~\ref{lemma:bdd}, we get
\begin{align*}
&\mathbb{E} \left[|v_x(t,X^{\mu}_t;\mu_k)-v_x(t,X^{\mu}_t;\mu)|\right]\\
&=C\mathbb{E}\left[\left|\frac{u(t,X^\mu_t;\mu)[u_x(t,X^\mu_t; \mu_k)-u_x(t,X^\mu_t;\mu)]+u_x(t,X^\mu_t;\mu)[u(t,X^\mu_t;\mu)-u(t,X^\mu_t;\mu_k)]}{u(t,X^\mu_t;\mu_k)u(t,X^\mu_t;\mu)}\right|\right]\\ 
&\leq C \mathbb{E}\left[|u_x(t,X^\mu_t; \mu_k)-u_x(t,X^\mu_t;\mu)|\right]+\frac{C}{\sqrt{T-t}}\mathbb{E}\left[|u(t,X^\mu_t;\mu)-u(t,X^\mu_t;\mu_k)|\right].
\end{align*}
Since all integrands are bounded, to show the expectations converge to zero, it suffices to check that the integrands converge to zero a.s. Fix $\omega\in\Omega$, we know from \eqref{u_x} that
\begin{align*}
&\left|u_x(t,X^\mu_t(\omega);\mu_k)-u_x(t,X^\mu_t(\omega);\mu)\right|\\
&\leq C\mathbb{E}\left[\frac{|Z|}{\sigma\sqrt{T-t}} \left|R_{\mu_k}(x+\sigma\sqrt{T-t}Z)-R_\mu(x+\sigma\sqrt{T-t}Z)\right|\right]_{x=X^\mu_t(\omega)}.
\end{align*}
Since $W_1(\mu_k, \mu)\rightarrow 0$, $\mu_k$ also converges to $\mu$ weakly, and the cumulative distribution function $F_{\mu_k}(x)$ converges to $F_\mu(x)$ at every point $x$ at which $F_\mu$ is continuous. It follows from the continuity of $R$ that $R_{\mu_k}(x)$ converges to $R_\mu(x)$ at every point $x$ at which $F_\mu$ is continuous. Since $F_\mu$ has at most countably many points of discontinuity, the random variable inside the expectation converges to zero a.s. The dominated convergence theorem then allows us to interchange the limit and the expectation, giving that
\[|u_x(t,X^\mu_t(\omega); \mu_k)-u_x(t,X^\mu_t(\omega);\mu)|\rightarrow 0.\]
Similarly, from \eqref{u} we obtain
\begin{align*}
&\left|u(t,X^\mu_t(\omega), \mu_k)-u(t,X^\mu_t(\omega),\mu)\right|\\
&\leq C\mathbb{E} \left[\left|R_{\mu_k}(x+\sigma\sqrt{T-t}Z)-R_\mu(x+\sigma\sqrt{T-t}Z)\right|\right]_{x=X^\mu_t(\omega)}.
\end{align*}
Again, using that $F_\mu$ has countably many points of discontinuity, one can show that
\[\left|u(t,X^\mu_t(\omega), \mu_k)-u(t,X^\mu_t(\omega),\mu)\right|\rightarrow 0.\]
Putting everything together, we have proved \eqref{claim1}. 

Next, we show \eqref{claim2} by Gronwall's inequality. Let $\epsilon>0$ be given. For any $r\in[0,t]$,
\begin{align*}
\mathbb{E}\left[|X^{\mu_k}_r-X^\mu_r|\right]&\leq \frac{1}{2c}\int_0^r \mathbb{E}\left[\left|v_x(s,X^{\mu_k}_s;\mu_k)-v_x(s,X^{\mu}_s;\mu)\right|\right]ds\\
&\leq \int_0^r \mathbb{E}\left[ \frac{C}{T-s}|X^{\mu_k}_s-X^\mu_s|+\frac{1}{2c}|v_x(s,X^{\mu}_s;\mu_k)-v_x(s,X^{\mu}_s;\mu)|\right]ds.
\end{align*}
By \eqref{claim1} and the bounded convergence theorem, we obtain
\[\int_0^t \mathbb{E}\left[|v_x(s,X^{\mu}_s;\mu_k)-v_x(s,X^{\mu}_s;\mu)|\right] ds\rightarrow 0.\]
So for $k$ large enough, we have
\begin{align*}
\mathbb{E}\left[|X^{\mu_k}_r-X^\mu_r|\right]
&\leq \frac{C}{T-t}\int_0^r \mathbb{E}\left[|X^{\mu_k}_s-X^\mu_s|\right]ds+\epsilon e^{-\frac{Ct}{T-t}}.
\end{align*}
By Gronwall's inequality, 
\[\mathbb{E}\left[|X^{\mu_k}_t-X^\mu_t|\right]\leq \epsilon e^{-\frac{Ct}{T-t}}+\frac{C}{T-t}\int_0^t \epsilon e^{-\frac{Ct}{T-t}}e^{\frac{C(t-s)}{T-t}}ds=\epsilon.\]
This completes the proof of \eqref{claim2}, and thus the continuity of $\Phi$. By Schauder's fixed point theorem, there exists a fixed point of $\Phi$ in the set $\mathcal{E}$.
\end{proof}

\subsection{Uniqueness of Nash equilibrium.} \label{sec:uniqueness}
Let $\mathcal{C}\subseteq\mathcal{P}(\mathbb{R})$ be a class of measures in which uniqueness will be established.
We first state a monotonicity assumption which is in the spirit of \cite{MR2295621}. 
\begin{assumption}\label{assumption-uniqueness}
For any $\mu, \mu'\in\mathcal{C}$, we have
\[\int_\mathbb{R} (R_\mu-R_{\mu'})(x) d(\mu-\mu')(x)\leq 0.\]
\end{assumption}
\begin{remark}
Take $\mathcal{C}$ to be the set of all measures in $\mathcal{P}(\mathbb{R})$ that are absolutely continuous with respect to the Lebesgue measure, then Assumption~\ref{assumption-uniqueness} is satisfied if the reward function $R$ is Lipschitz continuous and
\begin{equation*}
h(x,r_1,r_2):=\frac{R(x,r_1)-R(x,r_2)}{r_1-r_2} , \quad x\in \mathbb{R}, (r_1,r_2)\in [0,1]^2\backslash \{r_1=r_2\}
\end{equation*}
is differentiable and has non-negative partial derivatives $h_x, h_{r_1}, h_{r_2}$. This includes any continuously differentiable function $R$ which satisfies (i) $r\mapsto R(x,r)$ is convex, and (ii) $r\mapsto R_x(x,r)$ is non-decreasing. 
To see why $h_x, h_{r_1}, h_{r_2}\geq 0$ is sufficient to verify Assumption~\ref{assumption-uniqueness}, first note that for any $\mu, \mu'\in\mathcal{C}$, $R_\mu$ and $R_{\mu'}$ are absolutely continuous. Using integration by parts for absolutely continuous functions, we have
 \begin{align*}
&\int_\mathbb{R} (R_{\mu}-R_{\mu'})(x) d(\mu-\mu')(x)=\int_\mathbb{R} (F_{\mu}-F_{\mu'})(x)h(x,F_{\mu}(x),F_{\mu'}(x)) d(F_{\mu}-F_{\mu'})(x)\\
 &=-\int_\mathbb{R}(F_{\mu}-F_{\mu'})(x) d\left[(F_{\mu}-F_{\mu'})(x)h(x,F_{\mu}(x),F_{\mu'}(x))\right]\\
 &= -\int_\mathbb{R}(F_{\mu}-F_{\mu'})^2(x) dh(x,F_{\mu}(x),F_{\mu'}(x)) -\int_\mathbb{R} (R_{\mu}-R_{\mu'})(x) d(\mu-\mu')(x)
 \end{align*}
Re-arranging terms and using that $h_x, h_{r_1}, h_{r_2}\geq 0$, we get
 \begin{align*}
 &2\int_\mathbb{R} (R_{\mu}-R_{\mu'})(x) d(\mu-\mu')(x)\\
 &= -\int_\mathbb{R}(F_{\mu}-F_{\mu'})^2(x) \nabla h (x,F_\mu(x),F_{\mu'}(x)) \cdot (dx, dF_\mu(x),dF_{\mu'}(x))\leq 0.
 \end{align*}
 
 If one measures the rank of $x$ with respect to a given distribution $\mu$ using the "regular" cumulative distribution function $\tilde{F}_\mu(x):=\frac{1}{2}(F_\mu(x+)+F_\mu(x-))$, then for the case $R(x,r)=r$, Assumption~\ref{assumption-uniqueness} is satisfied with $\mathcal{C}=\mathcal{P}(\mathbb{R})$ (see \cite[Theorem B]{Hewitt1960}).
\end{remark}

\begin{proposition}
Under Assumption~\ref{assumption-uniqueness}, $\Phi$ has at most one fixed point in $\mathcal{C}$.
\end{proposition}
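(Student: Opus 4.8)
The plan is to run the classical Lasry--Lions monotonicity argument in its probabilistic (verification-theoretic) form, which is well suited to the present setup; one could equivalently phrase it through the coupled HJB/Fokker--Planck system. Let $\mu,\mu'\in\mathcal{C}$ be two fixed points of $\Phi$. Write $b(t,x):=v_x(t,x;\mu)/(2c)$ and $b'(t,x):=v_x(t,x;\mu')/(2c)$ for the two optimal feedback drifts, and let $X^\mu,X^{\mu'}$ be the corresponding optimally controlled states, driven by the \emph{same} Brownian motion $B$ from $X_0=0$, so that $\mathcal{L}(X^\mu_T)=\mu$ and $\mathcal{L}(X^{\mu'}_T)=\mu'$. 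The key quantitative input is the ``energy identity'' underlying the verification argument of Section~\ref{sec:IN}: for any admissible control $a$ in the problem \eqref{agent-problem}--\eqref{agent-problem-SDE} with terminal reward $R_\mu$, driving a state $X^a$,
\[
v(0,0;\mu)=\mathbb{E}\!\left[R_\mu(X^a_T)-\int_0^T c\,a_s^2\,ds\right]+c\,\mathbb{E}\!\int_0^T\big(a_s-b(s,X^a_s)\big)^2\,ds .
\]

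First I would apply this identity with $a_s=b'(s,X^{\mu'}_s)$, which is admissible since it has bounded expected cumulative effort by Lemma~\ref{lemma:bdd}, so that $X^a=X^{\mu'}$, and symmetrically with the roles of $\mu$ and $\mu'$ swapped. Denoting by $D_1\ge 0$ and $D_2\ge 0$ the two ``defect'' terms $c\,\mathbb{E}\int_0^T(b'-b)^2(s,X^{\mu'}_s)\,ds$ and $c\,\mathbb{E}\int_0^T(b-b')^2(s,X^{\mu}_s)\,ds$, and using that $v(0,0;\mu)=\mathbb{E}[R_\mu(X^\mu_T)-\int_0^T c\,b(s,X^\mu_s)^2ds]$ together with the analogous formula for $\mu'$, adding the two identities makes the running-cost terms cancel and leaves
\[
D_1+D_2=\int_{\mathbb{R}}(R_\mu-R_{\mu'})(x)\,d(\mu-\mu')(x).
\]
By Assumption~\ref{assumption-uniqueness} the right-hand side is $\le 0$, while $D_1+D_2\ge 0$; hence $D_1=D_2=0$.

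Since $c>0$, $D_1=0$ forces $b'(s,X^{\mu'}_s)=b(s,X^{\mu'}_s)$ for $ds\otimes d\mathbb{P}$-a.e.\ $(s,\omega)$, so $X^{\mu'}$, which by definition solves $dX_t=b'(t,X_t)\,dt+\sigma\,dB_t$, also solves $dX_t=b(t,X_t)\,dt+\sigma\,dB_t$ with $X_0=0$. By Lemma~\ref{lemma:bdd} the drift $b$ is Lipschitz in $x$ on each slab $[0,T-\delta]\times\mathbb{R}$ and has integrable $t$-modulus, so this SDE has a unique strong solution on $[0,T)$, namely $X^\mu$; letting $t\uparrow T$ (both processes have well-defined limits, as observed in Section~\ref{sec:IN}) gives $X^{\mu'}_T=X^\mu_T$ a.s., i.e.\ $\mu'=\mu$.

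The main obstacle is making the energy identity rigorous up to time $T$: the feedback drifts blow up like $(T-t)^{-1/2}$ and the quadratic variation $\int_0^T\sigma^2 v_x^2\,ds$ may be infinite, so the It\^o/completing-the-square step must be carried out on $[0,T-\delta]$ and then passed to the limit $\delta\downarrow 0$, which is justified by the boundedness of $v$, of $R$, and of the cumulative effort --- exactly the uniform (in $\mu$) estimates of Lemma~\ref{lemma:bdd} that already underpin the verification theorem in Section~\ref{sec:IN}. Apart from this, the argument is routine convexity bookkeeping, strict convexity of the running cost ($c>0$) being used precisely to upgrade $D_1=0$ to the pathwise identification of the two drifts.
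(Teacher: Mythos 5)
Your proof is correct and is essentially the paper's argument in repackaged form: your energy identity, applied once with $a=b'(\cdot,X^{\mu'})$ and once with the roles of $\mu$ and $\mu'$ swapped, is exactly the completed-square version of the paper's two It\^o identities \eqref{uniqueness-eq-1}--\eqref{uniqueness-eq-2}, and adding the two applications yields the same relation $D_1+D_2=\int_{\mathbb{R}}(R_\mu-R_{\mu'})\,d(\mu-\mu')\le 0$ that the paper obtains from \eqref{uniqueness-eq-3}+\eqref{uniqueness-eq-4}. The concluding step (vanishing defect forces the two feedback drifts to agree along one optimal path, then uniqueness for the SDE \eqref{SDE} gives $\mu=\mu'$) and the treatment of the terminal-time singularity by working on $[0,T-\delta]$ and passing to the limit also coincide with the paper's.
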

\begin{proof}
Suppose $\mu$ and $\mu'$ are two fixed points of $\Phi$ in $\mathcal{C}$. To simplify notation, write $v(t,x):=v(t,x;\mu)$ and $v'(t,x):=v(t,x;\mu')$. 
Let $X^\mu$ and $X^{\mu'}$ be the optimally controlled state processes (starting at zero) in response to $\mu$ and $\mu'$, respectively. Let $t\in(0,T)$. Using It\^o's lemma and the PDE satisfied by $v$ and $v'$, it is easy to show that
\begin{equation}\label{uniqueness-eq-1}
\mathbb{E} \left[v(t,X^\mu_t)\right]=v(0,0)+\mathbb{E}\left[\int_0^t \frac{1}{4c}(v_x)^2(s,X^\mu_s)ds\right],
\end{equation}
and 
\begin{equation}\label{uniqueness-eq-2}
\mathbb{E} \left[v'(t,X^\mu_t)\right]=v'(0,0)+\mathbb{E}\left[\int_0^t \frac{1}{4c}\left[2v'_xv_x -(v'_x)^2\right](s,X^\mu_s) ds\right].
\end{equation}
Write $\Delta v:=v-v'$, we obtain by subtracting \eqref{uniqueness-eq-2} from \eqref{uniqueness-eq-1} that
\[\mathbb{E}\left[\Delta v(t,X^\mu_t)\right]=\Delta v(0,0)+\mathbb{E}\left[\int_0^t \frac{1}{4c}[(\Delta v)_x(s,X^\mu_s)]^2ds\right].\]
Letting $t\rightarrow T$ and using the continuity of $v$ and $v'$ at the terminal time, we get
\begin{equation}\label{uniqueness-eq-3}
\mathbb{E}\left[(R_\mu-R_{\mu'})(X^\mu_T)\right]=\mathbb{E}\left[\Delta v(T,X^\mu_T)\right]=\Delta v(0,0)+\mathbb{E}\left[\int_0^T \frac{1}{4c}[(\Delta v)_x(s,X^\mu_s)]^2ds\right].
\end{equation}
Now, exchange the role of $\mu$ and $\mu'$. We also have
\begin{equation}\label{uniqueness-eq-4}
\mathbb{E}[(R_{\mu'}-R_{\mu})(X^{\mu'}_T)]=-\mathbb{E}[\Delta v(T,X^{\mu'}_T)]=-\Delta v(0,0)+\mathbb{E}\left[\int_0^T \frac{1}{4c}[(\Delta v)_x(s,X^{\mu'}_s)]^2ds\right].
\end{equation}
Adding \eqref{uniqueness-eq-3} and \eqref{uniqueness-eq-4}, and using that $\mu=\mathcal{L}(X^\mu_T), \mu'=\mathcal{L}(X^{\mu'}_T)$, we get
\begin{align*}
0&\leq \frac{1}{4c}\mathbb{E}\left[\int_0^T [(\Delta v)_x(s,X^\mu_s)]^2+[(\Delta v)_x(s,X^{\mu'}_s)]^2ds\right]\\
&=\mathbb{E}\left[(R_\mu-R_{\mu'})(X^\mu_T)\right]+\mathbb{E}[(R_{\mu'}-R_{\mu})(X^{\mu'}_T)]=\int_\mathbb{R} (R_\mu-R_{\mu'})(x) d(\mu-\mu')(x)\leq 0,
\end{align*}
where the last inequality follows from Assumption~\ref{assumption-uniqueness}. This implies
\[v_x(s,X^{\mu'}_s)=v'_x(s,X^{\mu'}_s) ~d\mathbb{P}\times dt\text{-a.e.}\]
By the uniqueness of the solution of the SDE \eqref{SDE}, we must have $X^\mu_T=X^{\mu'}_T$ a.s.\ and $\mu=\mu'$.
\end{proof}

\subsection{Approximate Nash equilibrium of the $N$-player game} \label{subsec:ApproxNE}
The MFG solution allows us to construct, using decentralized strategies, an approximate Nash equilibrium of the $N$-player game when $N$ is large. In the MFG literature, this is typically done using results from the propagation of chaos. Here we have a simpler problem since the mean-field interaction does not enter the dynamics of the state process. And it is this special structure that allows us to handle rank-based terminal payoff which fails to be Lipschitz continuous in general.

\begin{definition}
A progressively measurable vector $\mathbf{a}=(a_{1}, \ldots, a_N)$ is called an $\epsilon$-Nash equilibrium of the $N$-player game if 
\begin{itemize}
\item[(i)] $\mathbb{E}\left[\int_0^T |a_{i,t}| dt\right]<\infty$ for any $i\in\{1, \ldots, N\}$; and
\item[(ii)] for any $i\in\{1, \ldots, N\}$, and any progressively measurable process $\beta$ satisfying $\mathbb{E}\left[\int_0^T |\beta_t| dt\right]<\infty$, we have
\[\mathbb{E}\left[R_{\bar{\mu}^{N,\mathbf{a}}}(X^{a_i}_{i,T})-\int_0^T c a_{i,t}^2dt\right]+\epsilon\ge \mathbb{E}\left[R_{\bar{\mu}^{N,\mathbf{a}^i_\beta}}(X^\beta_{i,T})-\int_0^T c \beta_t^2dt\right],\]
where $X^\beta_{i,T}=\int_0^T \beta_t dt+\sigma B_{i,T}$, $\mathbf{a}^i_\beta=(a_1, \ldots, a_{i-1}, \beta, a_{i+1}, \ldots, a_N)$ and $\bar{\mu}^{N,\mathbf{a}}=\frac{1}{N}\sum_{j=1}^N \delta_{X^{a_j}_{j,T}}$.
\end{itemize}
\end{definition}

We now state an additional H\"older condition on $R$ which allows us to get the convergence rate. It holds, for example, when $R(x,r)=A(x)r^p+B(x)$ where $p\in(0,\infty)$ and $A\in L^\infty(\mathbb{R})$.
\begin{assumption}\label{assumption:R}
There exist constants $L>0$ and $\alpha\in(0,1]$ such that $|R(x,r_1)-R(x,r_2)|\leq L|r_1-r_2|^\alpha$ for any $r_1, r_2\in[0,1]$ and $x\in\mathbb{R}$.
\end{assumption}
\begin{theorem}\label{thm:epsNE}
Let Assumption~\ref{assumption:R} hold. For any fixed point $\mu$ of $\Phi$, 
\[\bar{a}_{i,t}:=(2c)^{-1}v_x(t,X^{\bar{a}_i}_{i,t};\mu), \ i=1,\ldots, N\] form an $O(N^{-\alpha/2})$-Nash equilibrium of the $N$-player game as $N\rightarrow \infty$.
\end{theorem}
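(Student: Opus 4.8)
The plan is to bound, for each player $i$ and each admissible deviation $\beta$, both the equilibrium payoff $J_i(\bar{\mathbf a}):=\mathbb{E}[R_{\bar\mu^{N,\bar{\mathbf a}}}(X^{\bar a_i}_{i,T})-\int_0^T c\bar a_{i,t}^2\,dt]$ and the post-deviation payoff $J_i(\mathbf a^i_\beta):=\mathbb{E}[R_{\bar\mu^{N,\mathbf a^i_\beta}}(X^\beta_{i,T})-\int_0^T c\beta_t^2\,dt]$ against the single-agent value $v(0,0;\mu)$ of problem~\eqref{agent-problem}--\eqref{agent-problem-SDE} at the equilibrium measure $\mu$, obtaining $J_i(\bar{\mathbf a})\ge v(0,0;\mu)-CN^{-\alpha/2}$ and $J_i(\mathbf a^i_\beta)\le v(0,0;\mu)+CN^{-\alpha/2}$. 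The decisive structural feature is that the mean-field interaction does not enter the state dynamics: when player $j$ plays $\bar a_j$, her terminal state $X^{\bar a_j}_{j,T}$ is a fixed measurable functional of $B_j$ alone (the feedback $a^\ast:=(2c)^{-1}v_x(\cdot,\cdot;\mu)$ is Lipschitz in $x$ by Lemma~\ref{lemma:bdd}), has law $\mu$ because $\mu$ is a fixed point of $\Phi$, and these variables are i.i.d.\ over $j$ and completely unaffected by the other players' controls. Condition~(i) in the definition of an $\epsilon$-Nash equilibrium holds since $\int_0^T a^\ast(s,X^\ast_s)\,ds\le 2\sigma K^2\sqrt{2T/\pi}$, as shown above.

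\emph{Step 1 (equilibrium payoff).} Since $\bar a_{i,t}=a^\ast(t,X^{\bar a_i}_{i,t})$ and $X^{\bar a_i}_i$ is the unique strong solution of $dX=a^\ast(t,X)\,dt+\sigma\,dB_i$ started at $0$, the pair $(X^{\bar a_i}_i,\bar a_i)$ has the same law as the optimally controlled pair of problem~\eqref{agent-problem}--\eqref{agent-problem-SDE}; hence the running-cost terms of $J_i(\bar{\mathbf a})$ and of $v(0,0;\mu)$ coincide, and, using also $X^{\bar a_i}_{i,T}\sim\mu$,
\[J_i(\bar{\mathbf a})-v(0,0;\mu)=\mathbb{E}\big[R_{\bar\mu^{N,\bar{\mathbf a}}}(X^{\bar a_i}_{i,T})-R_\mu(X^{\bar a_i}_{i,T})\big].\]
Writing $R_\nu(y)=R(y,F_\nu(y))$ and noting that CDF values lie in $[0,1]$, Assumption~\ref{assumption:R} gives $|R_\nu(y)-R_\mu(y)|\le L\snorm{F_\nu-F_\mu}^\alpha$ \emph{uniformly in} $y$, so by Jensen's inequality and $\alpha\le1$,
\[|J_i(\bar{\mathbf a})-v(0,0;\mu)|\le L\,\mathbb{E}\snorm{F_{\bar\mu^{N,\bar{\mathbf a}}}-F_\mu}^\alpha\le L\big(\mathbb{E}\snorm{F_{\bar\mu^{N,\bar{\mathbf a}}}-F_\mu}^2\big)^{\alpha/2}.\]
Since $\bar\mu^{N,\bar{\mathbf a}}$ is the empirical measure of $N$ i.i.d.\ samples from $\mu$, the Dvoretzky--Kiefer--Wolfowitz inequality gives $\mathbb{E}\snorm{F_{\bar\mu^{N,\bar{\mathbf a}}}-F_\mu}^2\le C/N$, whence $|J_i(\bar{\mathbf a})-v(0,0;\mu)|\le CN^{-\alpha/2}$.

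\emph{Step 2 (deviation).} Fix an admissible $\beta$ and put $\bar\nu^{N-1}:=\frac{1}{N-1}\sum_{j\ne i}\delta_{X^{\bar a_j}_{j,T}}$, the empirical measure of the $N-1$ non-deviating terminal states, which are i.i.d.\ $\sim\mu$ and whose joint law does not depend on $\beta$. As $\bar\mu^{N,\mathbf a^i_\beta}$ is obtained from $\bar\nu^{N-1}$ by altering one atom of mass $1/N$, we have $\snorm{F_{\bar\mu^{N,\mathbf a^i_\beta}}-F_{\bar\nu^{N-1}}}\le 2/N$, so by Assumption~\ref{assumption:R} and the subadditivity of $t\mapsto t^\alpha$,
\[|R_{\bar\mu^{N,\mathbf a^i_\beta}}(y)-R_\mu(y)|\le L(2/N)^\alpha+L\snorm{F_{\bar\nu^{N-1}}-F_\mu}^\alpha\qquad\text{for all }y.\]
Substituting $y=X^\beta_{i,T}$ (legitimate because the right-hand side is independent of $y$), taking expectations, and applying the DKW inequality to the $N-1$ samples, we get $\mathbb{E}[R_{\bar\mu^{N,\mathbf a^i_\beta}}(X^\beta_{i,T})]\le\mathbb{E}[R_\mu(X^\beta_{i,T})]+CN^{-\alpha/2}$. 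Finally, $\beta$ is an admissible control for problem~\eqref{agent-problem}--\eqref{agent-problem-SDE} driven by the Brownian motion $B_i$, so by the verification theorem recorded above $\mathbb{E}[R_\mu(X^\beta_{i,T})-\int_0^T c\beta_t^2\,dt]\le v(0,0;\mu)$, and therefore $J_i(\mathbf a^i_\beta)\le v(0,0;\mu)+CN^{-\alpha/2}$.

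Combining the two steps yields $J_i(\mathbf a^i_\beta)\le J_i(\bar{\mathbf a})+2CN^{-\alpha/2}$ for every $i$ and every admissible $\beta$, which is exactly the $\epsilon$-Nash property with $\epsilon=O(N^{-\alpha/2})$. The one genuinely delicate point is the rank-based reward, which is not continuous in the population measure for any metric metrizing weak convergence: the fix is to pass everything through the spatially uniform bound $\snorm{F_{\bar\mu^N}-F_\mu}^\alpha$ --- uniformity in the space variable being what makes the estimate insensitive to the fact that the point at which $R$ is evaluated is itself one of the empirical atoms and may be correlated with the deviation --- and then to insert the quantitative Glivenko--Cantelli rate from the DKW inequality together with Assumption~\ref{assumption:R} to carry through the exponent $\alpha$; everything else is bookkeeping on top of Lemma~\ref{lemma:bdd} and the verification theorem.
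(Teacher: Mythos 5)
Your proof is correct and follows essentially the same route as the paper's: compare both the equilibrium payoff and any deviator's payoff to the mean-field value $v(0,0;\mu)$, exploit that the states are i.i.d.\ with law $\mu$ because the interaction enters only through the terminal reward, control the rank perturbation uniformly in the evaluation point via $\snorm{F_{\hat\mu}-F_\mu}$ and the Dvoretzky--Kiefer--Wolfowitz inequality, and invoke optimality of $\bar a_i$ against $\mu$. The only differences are cosmetic: you extract the rate via Jensen and the second moment of the DKW bound (and via the $(N-1)$-normalized empirical measure plus subadditivity of $t\mapsto t^\alpha$ in the deviation step), whereas the paper integrates the DKW tail of $\snorm{\cdot}^\alpha$ directly and uses the exact convex decomposition $F_{\bar\nu^N}=\tfrac1N\mathbf{1}_{[X^\beta_{i,T},\infty)}+\tfrac{N-1}{N}\hat F^{N-1}_\mu$ with Jensen; both yield $O(N^{-\alpha/2})$.
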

\begin{proof}
Let $\mu$ be a fixed point of $\Phi$, and let $\bar{a}_{i,t}$ be defined as in the theorem statement. To keep the notation simple, we omit the superscript of any state process if it is controlled by the optimal Markovian feedback strategy $(2c)^{-1}v_x(t,x;\mu)$. Let 
\[V:=v(0,0;\mu)=\mathbb{E}\left[R_\mu(X_{T})-\int_0^T \frac{1}{4c}v_x^2(s,X_s;\mu)ds\right]\]
be the value of the limiting game where $X$ satisfies \eqref{SDE}, and
\[J^{N}_i:=\mathbb{E}\left[R_{\bar{\mu}^N}(X_{i,T})-\int_0^T c\bar{a}^2_{i,s}ds\right]\]
be the net gain of player $i$ in an $N$-player game, if everybody use the candidate approximate Nash equilibrium $(\bar{a}_1,\ldots, \bar{a}_N)$. Here $\bar{\mu}^N=\frac{1}{N}\sum_{i=1}^N\delta_{X_{i,T}}$. Since our state processes do not depend on the empirical measure (the interaction is only through the terminal payoff), each $X_i$ is simply an independent, identical copy of $X$. Hence
\[V=\mathbb{E}\left[R_\mu(X_{i,T})-\int_0^T c\bar{a}^2_{i,s}ds\right].\]
Let us first show that $J^N_i$ and $V$ are close. We have
\[J^N_i-V=\mathbb{E}[R_{\bar{\mu}^N}(X_{i,T})-R_{\mu}(X_{i,T})].\]
It follows from the $\alpha$-H\"older continuity of $R$ that
\begin{align*}
|J^N_i-V|&
\leq L \mathbb{E}[|F_{\bar{\mu}^N}(X_{i,T})-F_\mu(X_{i,T})|^\alpha]\leq L \mathbb{E}[\|\hat{F}^{N}_\mu-F_\mu\|^\alpha_\infty],
\end{align*}
where for $n\in\mathbb{N}$, $\hat{F}^{n}_\mu$ denotes the empirical cumulative distribution function of $n$ i.i.d.\ random variables with cumulative distribution function $F_\mu$. By Dvoretzky-Kiefer-Wolfowitz inequality, we have
\[\mathbb{P}\left(\|\hat{F}^{N}_\mu-F_\mu\|_\infty>\epsilon\right)\leq 2e^{-2N\epsilon^2}.\]
It follows that
\begin{align*}
|J^N_i-V|&\leq L\mathbb{E}[\|\hat{F}^{N}_\mu-F_\mu\|^\alpha_\infty]=L\int_0^\infty\mathbb{P}\left(\|\hat{F}^{N}_\mu-F_\mu\|^\alpha_\infty>z\right) dz\\
&\leq L\int_0^\infty2e^{-2Nz^{2/\alpha}} dz=\frac{2L}{(4N)^{\alpha/2}}\int_0^\infty e^{-\frac{1}{2}y^{2/\alpha}} dy\\
&=O(N^{-\alpha/2})~\text{ as } N\rightarrow \infty.
\end{align*}

Next, consider the system where player $i$ makes a unilateral deviation from the candidate approximate Nash equilibrium $(\bar{a}_1,\ldots, \bar{a}_N)$; say, she chooses an admissible control $\beta$. Denote her controlled state process by $X^\beta_i$, and the state processes of all other players by $X_j$ as before for $j\neq i$. Let $\bar{\nu}^N:=\frac{1}{N}(\delta_{X^\beta_{i,T}}+\sum_{j\neq i} \delta_{X_{j,T}})$ be the corresponding empirical measure of the terminal states, and
\[J^{N,\beta}_i:=\mathbb{E}\left[R_{\bar{\nu}^N}(X^\beta_{i,T})-\int_0^T c\beta^2_{s}ds\right]\]
be the corresponding net gain for player $i$. We have
\begin{align*}
J^{N,\beta}_i-V&=\mathbb{E}\left[R_{\bar{\nu}^N}(X^\beta_{i,T})-\int_0^T c\beta^2_{s}ds\right]-\mathbb{E}\left[R_\mu(X_{i,T})-\int_0^T c\bar{a}^2_{i,s}ds\right]\\
&=\mathbb{E}\left[R_{\bar{\nu}^N}(X^\beta_{i,T})-R_\mu(X^\beta_{i,T})\right]\\
&\quad +\mathbb{E}\left[R_\mu(X^\beta_{i,T})-\int_0^T c\beta^2_{s}ds\right] -\mathbb{E}\left[R_\mu(X_{i,T})-\int_0^T c\bar{a}^2_{i,s}ds\right]\\
&\leq \mathbb{E}\left[R_{\bar{\nu}^N}(X^\beta_{i,T})-R_\mu(X^\beta_{i,T})\right]
\end{align*}
where the inequality follows from the optimality of $\bar{a}_i$ for the $i$-th player's problem. Similar to how we estimate $|J^N_i-V|$, we have
\begin{align*}
J^{N,\beta}_i-V&\leq L\mathbb{E}[|F_{\bar{\nu}^N}(X^\beta_{i,T})-F_\mu(X^\beta_{i,T})|^\alpha]\\
&=L \mathbb{E}\left[\left|\frac{1}{N}\left(1-F_{\mu}(X^\beta_{i,T})\right)+\frac{N-1}{N}\left(\hat{F}^{N-1}_\mu(X^\beta_{i,T})-F_\mu(X^\beta_{i,T})\right)\right|^\alpha\right] \\
&\leq L\mathbb{E}\left[\left(\frac{1}{N}+\frac{N-1}{N}\|\hat{F}^{N-1}_\mu-F_\mu\|_\infty\right)^\alpha\right]\\
&\leq L\left(\frac{1}{N}+\frac{N-1}{N}\mathbb{E}\left[\|\hat{F}^{N-1}_\mu-F_\mu\|_\infty\right]\right)^\alpha
=O(N^{-\alpha/2})~\text{ as } N\rightarrow \infty,
\end{align*}
where we used Jensen's inequality in the fourth step. Combining the two estimates, we obtain
\[J^{N,\beta}_i-J^N_i\leq J^{N,\beta}_i-V+|V-J^N_i|= O(N^{-\alpha/2})~\text{ as } N\rightarrow \infty.\]
This shows $(\bar{a}_1,\ldots, \bar{a}_N)$ is an $O(N^{-\alpha/2})$-approximate Nash equilibrium.
\end{proof}
\begin{remark}
Without Assumption~\ref{assumption:R}, one can still use the continuity and boundedness of $R$ to get convergence; that is, the MFG solution still provides an approximate Nash equilibrium of the $N$-player game. However, the convergence rate is no longer valid.
\end{remark}

\section{Mean field approximation when there is common noise}\label{sec:CN}
In this section, we assume $\sigma_0>0$ and $R(x,r)$ is independent of $x$; the latter means the reward is purely rank-dependent. Unlike the case with only idiosyncratic noises, since the common noise does not average out as $N\rightarrow \infty$, the limiting environment becomes a random measure instead of a deterministic one. So the MFG problem now reads:
\begin{itemize}
\item[(i)] Fix a random measure $\mu$ of the terminal distribution of the population where the randomness comes from the common noise $W$, and solve the stochastic control problem faced by a representative player:
\begin{equation}\label{MFGCN}
V(\mu)=\sup_a \mathbb{E}\left[R_\mu(X_T)-\int_0^T \frac{1}{2}a^2_s ds\right],
\end{equation}
where
\begin{equation}\label{MFGCN-SDE}
dX_s=a_s ds+\sigma dB_s+\sigma_0 dW_s, \quad X_0=0.
\end{equation}
Denote the optimally controlled state process by $X^\mu$.
\item[(ii)] Find a fixed point of the mapping $\Psi: \mu\mapsto \mathcal{L}(X^\mu_T|W)$.
\end{itemize}

For $\mu\in\mathcal{P}(\mathbb{R})$, denote by $\mu(\cdot +q)$ the probably measure obtained by shifting $\mu$ to the left by $q\in\mathbb{R}$. Observe that when $R$ is independent of $x$, we have $R_\mu(x+q)=R(F_\mu(x+q))=R(F_{\mu(\cdot+q)}(x))=R_{\mu(\cdot+q)}(x)$. So we are precisely in the framework of translation invariant MFGs. In fact, purely rank-based functions should be another important example of translation invariant functions besides the convolution and local interaction given in \cite{DW2014}. In the general case without translation invariance, results have only been obtained in the weak formulation, see \cite{LackerAAP}.

In the remaining discussion, let us refer to the problems  \eqref{agent-problem}-\eqref{agent-problem-SDE} and \eqref{MFGCN}-\eqref{MFGCN-SDE} together with their respective fixed point problems as MFG$_{0}$ and MFG$_{cn}$, respectively. A direct application of \cite[Theorem 2.5]{DW2014} yields the following existence result.
\begin{proposition}\label{prop:REM}
Let $\bar{\mu}$ be a (deterministic) equilibrium measure of MFG$_0$. Then
\[\mu:=\bar{\mu}(\cdot -\sigma_0 W_T)\]
is a (random) equilibrium measure of MFG$_{cn}$. Moreover, the optimal control associated with $\bar{\mu}$ for MFG$_{0}$ is also an optimal open loop control associated with $\mu$ for MFG$_{cn}$.
\end{proposition}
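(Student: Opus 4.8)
The plan is to reduce MFG$_{cn}$ with the candidate random measure $\mu=\bar{\mu}(\cdot-\sigma_0 W_T)$ to MFG$_0$ with the deterministic measure $\bar{\mu}$ by subtracting the common-noise component from the state, and then to verify, one at a time, the two defining properties of a mean field equilibrium: optimality of the representative player's control for the fixed measure $\mu$, and the consistency (fixed-point) condition $\Psi(\mu)=\mu$.

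\emph{Reduction.} For any admissible $a$ in \eqref{MFGCN-SDE} write $X_s=Y_s+\sigma_0 W_s$, where $dY_s=a_s\,ds+\sigma\,dB_s$, $Y_0=0$; thus $Y$ is an MFG$_0$ state process driven by $B$. Since $R$ is independent of $x$, the translation-invariance identity recorded just above the proposition gives $R_\mu(X_T)=R_{\bar{\mu}(\cdot-\sigma_0 W_T)}(Y_T+\sigma_0 W_T)=R_{\bar{\mu}}(Y_T)$, while the running cost in \eqref{MFGCN} does not involve the state at all. Hence the functional maximized in \eqref{MFGCN} coincides with the MFG$_0$ objective \eqref{agent-problem} evaluated at $(t,x)=(0,0)$ with measure $\bar{\mu}$, the only difference being that the supremum is now over controls adapted to the filtration generated by $(B,W)$ rather than by $B$ alone.

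\emph{Optimality.} Next I would show the larger filtration brings no gain. Conditioning on $W$ and using the independence of $B$ and $W$: for a.e.\ realization of the path of $W$ the conditional problem is a standard stochastic control problem driven by $B$ only (after the substitution $X=Y+\sigma_0 W$ neither $R_{\bar{\mu}}(Y_T)$ nor the running cost sees the path of $W$), and its value is at most $v(0,0;\bar{\mu})$ by the verification argument of Section~\ref{sec:IN}. Taking expectations over $W$ yields $V(\mu)\le v(0,0;\bar{\mu})$. Conversely, the MFG$_0$ optimal feedback control produces the process $\bar{a}_s:=(2c)^{-1}v_x(s,Y^\ast_s;\bar{\mu})$, where $Y^\ast$ is the MFG$_0$ optimally controlled state process; this process is adapted to $B$ alone, is admissible for \eqref{MFGCN-SDE}, and attains $v(0,0;\bar{\mu})$. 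Therefore $V(\mu)=v(0,0;\bar{\mu})$, and $\bar{a}$ --- equivalently the open-loop control $(2c)^{-1}v_x(s,X^\mu_s-\sigma_0 W_s;\bar{\mu})$ --- is optimal for MFG$_{cn}$, with optimally controlled state process $X^\mu=Y^\ast+\sigma_0 W$.

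\emph{Consistency.} Finally I would check $\Psi(\mu)=\mu$. Because $Y^\ast$ is driven by $B$ only it is independent of $W$, so the conditional law of $X^\mu_T=Y^\ast_T+\sigma_0 W_T$ given $W$ is just the deterministic law of $Y^\ast_T$ shifted by $\sigma_0 W_T$. Since $\bar{\mu}$ is an equilibrium measure of MFG$_0$ we have $\mathcal{L}(Y^\ast_T)=\Phi(\bar{\mu})=\bar{\mu}$, whence $\mathcal{L}(X^\mu_T\mid W)=\bar{\mu}(\cdot-\sigma_0 W_T)=\mu$, which is exactly the fixed-point property. The step requiring the most care is the optimality argument: one must make precise, via a regular conditional probability (disintegration along $W$), that no control progressively measurable with respect to the joint filtration of $(B,W)$ can outperform the $B$-adapted optimizer once the objective has been rewritten so that it no longer depends on $W$. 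Everything else is bookkeeping with the change of variables and the already-established properties of the MFG$_0$ solution; indeed, the whole argument is precisely the content of \cite[Theorem 2.5]{DW2014} once one observes that purely rank-based rewards are translation invariant.
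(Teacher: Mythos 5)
Your proof is correct and takes essentially the same route as the paper, which establishes this proposition simply by invoking \cite[Theorem 2.5]{DW2014} after recording the translation-invariance identity $R_\mu(x+q)=R_{\mu(\cdot+q)}(x)$; your decomposition $X=Y+\sigma_0 W$ and the resulting reduction to MFG$_0$ is exactly the mechanism behind that cited theorem. The one delicate step you flag --- that $(B,W)$-adapted controls give no advantage --- is fine as you describe it, and can alternatively be settled by running the verification (It\^o) argument of Section~\ref{sec:IN} on $v(s,Y_s;\bar\mu)$, which remains valid because $B$ is still a Brownian motion with respect to the enlarged filtration.
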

The intuition is that the whole population is affected in parallel by the common noise. Thus, the effect of common noise is essentially cancelled out in the optimization problem due to translation invariance. Such a random equilibrium measure is clearly $\sigma(W)$-measurable, hence is a strong MFG solution in the language of \cite{CDL15}. 
\begin{remark}\label{rmk:openloop}
In Proposition~\ref{prop:REM}, the equilibrium control for both MFG$_{0}$ and MFG$_{cn}$ is $a^\ast(t,X^\circ_t;\bar{\mu})$ where $a^\ast(t,x)=(2c)^{-1}v_x(t,x;\bar{\mu})$ and $X^\circ$ is the solution to \eqref{agent-problem-SDE} controlled by $a^\ast$. Such a control is a feedback control for MFG$_{0}$, but only an open loop control for MFG$_{cn}$. Here it is reasonable to use open loop controls for MFG$_{cn}$ because for an $N$-player game, the individual can observe all state processes. When $N$ is large, the individual noises average out. Thus, observing the entire system should give each player some information about the common noise. Passing to the MFG limit, the individual should be allowed more information than that generated by her own state process.
\end{remark}


\begin{theorem}
Let Assumption~\ref{assumption:R} hold. Let $\bar{\mu}$ be an equilibrium measure of MFG$_0$,
and let $\bar{a}_{i,t}=(2c)^{-1}v_x(t,X^\circ_{i,t};\bar{\mu})$ where $X^\circ_i$ is the solution to \eqref{agent-problem-SDE} with $a$ replaced by $\bar{a}_i$ and $B$ replaced by $B_i$. Then $(\bar{a}_1,\ldots, \bar{a}_N)$ form an $O(N^{-\alpha/2})$-Nash equilibrium of the $N$-player game with common noise.
\end{theorem}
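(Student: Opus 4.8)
The plan is to reduce the common-noise $N$-player game to the idiosyncratic-noise case already settled in Theorem~\ref{thm:epsNE}, by exploiting the fact that a purely rank-dependent reward is translation invariant, and then to re-run the estimates of that proof almost verbatim for suitably shifted (``de-noised'') processes. First I would set up notation. When every player uses the candidate control, player $i$'s state satisfies $dX_{i,t}=\bar a_{i,t}\,dt+\sigma\,dB_{i,t}+\sigma_0\,dW_t$ with $\bar a_{i,t}=(2c)^{-1}v_x(t,X^\circ_{i,t};\bar\mu)$, and subtracting this from the SDE defining $X^\circ_i$ gives $X_{i,t}=X^\circ_{i,t}+\sigma_0 W_t$; in particular $X_{i,T}-\sigma_0W_T=X^\circ_{i,T}$, the processes $X^\circ_i$ are i.i.d.\ copies of the MFG$_0$-optimal process, and since $\bar\mu$ is a fixed point of $\Phi$ each $X^\circ_{i,T}$ has law $\bar\mu$. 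If instead player $i$ deviates to an admissible control $\beta$ (progressively measurable for $\{\mathcal F_t\}$ with $\mathbb{E}\int_0^T|\beta_t|\,dt<\infty$) while the others keep using $\bar a_j$ (whose dynamics are unaffected, since the interaction enters only through the terminal payoff), then $Y^\beta_{i,t}:=X^\beta_{i,t}-\sigma_0 W_t$ solves $dY^\beta_{i,t}=\beta_t\,dt+\sigma\,dB_{i,t}$. Because $R$ is independent of $x$ and ranks are shift-invariant, $F_{\bar\mu^N}(X_{i,T})=F_{\bar\mu^{N,\circ}}(X^\circ_{i,T})$ and $F_{\bar\nu^N}(X^\beta_{i,T})=F_{\bar\nu^{N,\circ}}(Y^\beta_{i,T})$, where $\bar\mu^{N,\circ}:=\frac1N\sum_j\delta_{X^\circ_{j,T}}$ and $\bar\nu^{N,\circ}:=\frac1N(\delta_{Y^\beta_{i,T}}+\sum_{j\ne i}\delta_{X^\circ_{j,T}})$. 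Hence the net gains $J^N_i$ and $J^{N,\beta}_i$ in the common-noise game equal $\mathbb{E}[R_{\bar\mu^{N,\circ}}(X^\circ_{i,T})-\int_0^T c\bar a_{i,s}^2\,ds]$ and $\mathbb{E}[R_{\bar\nu^{N,\circ}}(Y^\beta_{i,T})-\int_0^T c\beta_s^2\,ds]$, i.e.\ exactly the quantities appearing in the proof of Theorem~\ref{thm:epsNE}, now built from i.i.d.\ samples with CDF $F_{\bar\mu}$.

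With this reduction in hand, set $V:=v(0,0;\bar\mu)$. For the no-deviation term, $J^N_i-V=\mathbb{E}[R_{\bar\mu^{N,\circ}}(X^\circ_{i,T})-R_{\bar\mu}(X^\circ_{i,T})]$, so Assumption~\ref{assumption:R} together with the Dvoretzky--Kiefer--Wolfowitz inequality (applied to the i.i.d.\ sample $\{X^\circ_{j,T}\}$) gives $|J^N_i-V|=O(N^{-\alpha/2})$, exactly as in Theorem~\ref{thm:epsNE}. For a deviation, write $J^{N,\beta}_i-V=\mathbb{E}[R_{\bar\nu^{N,\circ}}(Y^\beta_{i,T})-R_{\bar\mu}(Y^\beta_{i,T})]+\bigl(\mathbb{E}[R_{\bar\mu}(Y^\beta_{i,T})-\int_0^T c\beta_s^2\,ds]-V\bigr)$. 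The first term is $O(N^{-\alpha/2})$ by Assumption~\ref{assumption:R}, the pointwise bound $|F_{\bar\nu^{N,\circ}}(Y^\beta_{i,T})-F_{\bar\mu}(Y^\beta_{i,T})|\le\frac1N+\frac{N-1}{N}\|\hat F^{N-1}_{\bar\mu}-F_{\bar\mu}\|_\infty$ (which removes $Y^\beta_{i,T}$, so it is insensitive to any correlation between $Y^\beta_{i,T}$ and the other players' noises), Jensen's inequality, and DKW --- again identical to Theorem~\ref{thm:epsNE}. The second term is $\le 0$ because $\bar a_i$ is optimal for the single-player control problem \eqref{agent-problem}--\eqref{agent-problem-SDE} with terminal reward $R_{\bar\mu}$, whose value is $V$. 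Combining, $J^{N,\beta}_i-J^N_i=(J^{N,\beta}_i-V)+(V-J^N_i)=O(N^{-\alpha/2})$ uniformly in $i$ and $\beta$, which is the assertion; part~(i) of the definition of an $\epsilon$-Nash equilibrium follows from the estimate $0\le\bar a_{i,t}\le 2c\sigma K^2\sqrt{2/\pi}\,(T-t)^{-1/2}$ of Lemma~\ref{lemma:bdd}.

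The only point requiring an argument genuinely beyond Theorem~\ref{thm:epsNE} is the optimality inequality $\mathbb{E}[R_{\bar\mu}(Y^\beta_{i,T})-\int_0^T c\beta_s^2\,ds]\le V$ when the competing control $\beta$ is permitted to depend on the common noise $W$ and on the other players' trajectories (as it may in the common-noise game, cf.\ Remark~\ref{rmk:openloop}), whereas the verification argument of Section~\ref{sec:IN} was phrased for controls adapted to a filtration carrying only $B_i$. The resolution is that $B_i$ is independent of $W$ and of $(B_j)_{j\ne i}$, so it remains a Brownian motion with respect to the enlarged filtration $\{\mathcal F_t\}$; therefore $Y^\beta_i$ fits the template of \eqref{agent-problem-SDE} for this larger filtration, and the Cole--Hopf/verification computation underlying Lemma~\ref{lemma:bdd} carries over unchanged --- Itô's formula gives $dv(s,Y^\beta_{i,s})\le c\beta_s^2\,ds+\sigma v_x(s,Y^\beta_{i,s})\,dB_{i,s}$, which, integrated with the same localization near $T$ used in Section~\ref{sec:IN} and then taken in expectation, yields $\mathbb{E}[v(T,Y^\beta_{i,T})]=\mathbb{E}[R_{\bar\mu}(Y^\beta_{i,T})]\le V+\mathbb{E}\int_0^T c\beta_s^2\,ds$. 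I expect this enlargement-of-filtration observation to be the one real subtlety; everything else is bookkeeping identical to the $\sigma_0=0$ case, carried out for the shifted processes. Equivalently, one may invoke Proposition~\ref{prop:REM}, which already identifies $\mu:=\bar\mu(\cdot-\sigma_0W_T)$ as an equilibrium measure of MFG$_{cn}$ with $\bar a_i$ an associated optimal open-loop control, and then phrase the two empirical-CDF estimates directly in terms of $\mu$ and the common-noise processes $X_{i,T}$.
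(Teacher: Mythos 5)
Your proposal is correct and follows essentially the same route as the paper: shift out the common noise via $X_{i}=X^\circ_{i}+\sigma_0W$, use translation invariance of the purely rank-dependent reward to rewrite $V$, $J^N_i$ and $J^{N,\beta}_i$ in terms of the de-noised processes, and then reuse the DKW/Jensen estimates from Theorem~\ref{thm:epsNE} verbatim. The only difference is that where the paper delegates the optimality inequality $\mathbb{E}[R_\mu(X^\beta_{i,T})-\int_0^T c\beta_s^2\,ds]\le V$ to Proposition~\ref{prop:REM} (i.e.\ to \cite[Theorem 2.5]{DW2014}), you supply a direct It\^o/verification argument under the enlarged filtration --- a legitimate and slightly more self-contained treatment of the same step.
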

\begin{proof}
Let $\mu$ and $X^\circ$ be defined as in Proposition~\ref{prop:REM} and Remark~\ref{rmk:openloop}. Also let $X:=X^\circ+\sigma_0W$ and $X_{i}:=X^\circ_{i}+\sigma_0 W$. By Proposition~\ref{prop:REM}, we have
\[V:=V(\mu)=\mathbb{E}\left[R_\mu(X_{T})-\int_0^T \frac{1}{4c}v_x^2(s,X^\circ_s;\bar{\mu})ds\right].\]
Also let
\[J^{N}_i:=\mathbb{E}\left[R_{\bar{\mu}^N}(X_{i,T})-\int_0^T c\bar{a}^2_{i,s}ds\right]\]
be the net gain of player $i$ in an $N$-player game, if everybody use the candidate approximate Nash equilibrium $(\bar{a}_1,\ldots, \bar{a}_N)$. Here $\bar{\mu}^N:=\frac{1}{N}\sum_{i=1}^N\delta_{X_{i,T}}$ denotes the empirical measure of the terminal state of the system. Translation invariance and the definition of $\mu$ imply
\[R_{\mu}(X_T)=R_\mu(X^\circ_T+\sigma_0 W_T)=R_{\mu(\cdot+ \sigma_0 W_T)}(X^\circ_T)=R_{\bar{\mu}}(X^\circ_T).\]
Similarly, since
\[\bar{\mu}^N=\frac{1}{N}\sum_{i=1}^N \delta_{X^\circ_{i,T}+\sigma_0 W_T}=\left(\frac{1}{N}\sum_{i=1}^N \delta_{X^\circ_{i,T}}\right)(\cdot-\sigma_0 W_T)=:\bar{\mu}_\circ^N(\cdot -\sigma_0 W_T),\]
we also have
\[R_{\bar{\mu}^N}(X_{i,T})=R_{\bar{\mu}^N}(X^\circ_{i,T}+\sigma_0 W_T)=R_{\bar{\mu}^N_\circ}(X^\circ_{i,T}).\]
Hence we can rewrite $V$ and $J^N_i$ as
\[V=\mathbb{E}\left[R_{\bar{\mu}}(X^\circ_{i,T})-\int_0^T c\bar{a}_{i,s}^2ds\right] \quad\text{and}\quad J^{N}_i=\mathbb{E}\left[R_{\bar{\mu}_\circ^N}(X^\circ_{i,T})-\int_0^T c\bar{a}^2_{i,s}ds\right],\]
where we also used that $X^\circ_i$ has the same distribution as $X^\circ$. From the proof of Theorem~\ref{thm:epsNE}, we know
\[|J^N_i-V|\leq \frac{2L}{(4N)^{\alpha/2}}\int_0^\infty e^{-\frac{1}{2}y^{2/\alpha}} dy=O(N^{-\alpha/2})~\text{ as } N\rightarrow \infty.\]

Next, suppose player $i$ makes a unilateral deviation to some admissible control $\beta$. Denote her controlled state process with and without common noise by $X^\beta_i$ and $X^{\circ,\beta}_i$, respectively. We have $X^\beta_i=X^{\circ,\beta}_i+\sigma_0 W$. Let $\bar{\nu}^N:=\frac{1}{N}(\delta_{X^\beta_{i,T}}+\sum_{j\neq i} \delta_{X_{j,T}})$ and
\[J^{N,\beta}_i:=\mathbb{E}\left[R_{\bar{\nu}^N}(X^\beta_{i,T})-\int_0^T c\beta^2_{s}ds\right].\]
We have by the definition of $V$ and the H\"{o}lder continuity of $R$ that 
\begin{align*}
J^{N,\beta}_i-V&=\mathbb{E}\left[R_{\bar{\nu}^N}(X^\beta_{i,T})-\int_0^T c\beta^2_{s}ds\right]-V\\
&=\mathbb{E}\left[R_{\bar{\nu}^N}(X^\beta_{i,T})-R_\mu(X^\beta_{i,T})\right]+\mathbb{E}\left[R_\mu(X^\beta_{i,T})-\int_0^T c\beta^2_{s}ds\right]-V\\
&\leq \mathbb{E}\left[R_{\bar{\nu}^N}(X^\beta_{i,T})-R_\mu(X^\beta_{i,T})\right]\leq L\mathbb{E}\left[|\bar{\nu}^N(-\infty,X^\beta_{i,T}]-\mu(-\infty,X^\beta_{i,T}]|^\alpha\right].
\end{align*}
By translation invariance,
$\bar{\nu}^N(-\infty,X^\beta_{i,T}]=\frac{1}{N}+\left(\frac{1}{N}\sum_{j\neq i} \delta_{X^\circ_{j,T}}\right)(-\infty,X^{\circ,\beta}_{i,T}].$
By definition of $\mu$, $\mu(-\infty,X^\beta_{i,T}]=\bar{\mu}(-\infty,X^{\circ,\beta}_{i,T}]$. Also note that for $j\neq i$, $\mathcal{L}(X^\circ_{j,T})=\mathcal{L}(X^\circ_T)=\bar{\mu}$ since $\bar{\mu}$ is an equilibrium measure for MFG$_0$. Therefore, by expressing everything in terms of $\bar{\mu}$, $X^\circ$ and $X^{\circ,\beta}$, we are again back in the framework without common noise. The proof of Theorem~\ref{thm:epsNE} implies that
\begin{align*}
J^{N,\beta}_i-V&\leq L\mathbb{E}\left[\left|\frac{1}{N}+\left[\left(\frac{1}{N}\sum_{j\neq i} \delta_{X^\circ_{j,T}}\right)-\bar{\mu}\right](-\infty,X^{\circ,\beta}_{i,T}]\right|^\alpha\right]
\\&=L\mathbb{E}\left[\left|\frac{1}{N}(1-F_{\bar{\mu}}(X^{\circ,\beta}_{i,T}))+ \frac{N-1}{N} \left(\hat{F}^{N-1}_{\bar{\mu}}(X^{\circ,\beta}_{i,T})-F_{\bar{\mu}}(X^{\circ,\beta}_{i,T})\right)\right|^\alpha\right]\\
&\leq L\left(\frac{1}{N}+\frac{N-1}{N}\mathbb{E}\left[\|\hat{F}^{N-1}_{\bar{\mu}}-F_{\bar{\mu}}\|_\infty\right]\right)^\alpha
=O(N^{-\alpha/2})~\text{ as } N\rightarrow \infty,
\end{align*}
where $\hat{F}^{n}_{\bar{\mu}}$ denotes the empirical cumulative distribution function of $n$ i.i.d.\ random variables with cumulative distribution function $F_{\bar{\mu}}$. 
We conclude that $J^{N,\beta}_i-J^N_i\leq O(N^{-\alpha/2})$ and that $(\bar{a}_1,\ldots, \bar{a}_N)$ is an $O(N^{-\alpha/2})$-Nash equilibrium of the $N$-player game with common noise.
\end{proof}

\begin{remark}
The arbitrary control $\beta$ in the above proof may depend on the common noise. However, the additional information of the common noise gives each player very little advantage when everyone else use their respective $\bar{a}_{i}$'s which are independent of the common noise.
\end{remark}

\bibliographystyle{amsplain}
\bibliography{tournament}

\end{document}